\newcommand{\deck}{\mathrm{deck}}
\DeclarePairedDelimiter{\floor}{\lfloor}{\rfloor}
\DeclarePairedDelimiterX\ip[2]{\langle}{\rangle}{#1,#2}
\newcommand{\N}{\mathbb{N}}
\newcommand{\rzh}{\mathrm{rm}}
\newcommand{\Z}{\mathbb{Z}}
\newtheorem{thm}{Theorem}
\newtheorem{cor}[thm]{Corollary}
\newtheorem{lem}[thm]{Lemma}
\begin{document}

\begin{frontmatter}[classification=text]


\author[lp]{Luke Pebody}

\begin{abstract}
Given an action of a group $G$ on a set $S$, the $k$-deck of a subset $T$ 
of $S$ is the multiset of all subsets of $T$ of size at most $k$, each given up to 
translation by $G$.

For a given subset $T$, the {\em reconstruction number} of $T$ is the 
minimum 
$k$ such that the $k$-deck uniquely identifies $T$ up to translation by $G$, 
and the {\em reconstruction number} of the action $G:S$ is the maximum 
reconstruction number of any subset of $S$.

The concept of reconstruction number extends naturally to multisubsets $T$ 
of $S$ and in~\cite{CPC:257539}, the author calculated the 
multiset-reconstruction number of all finite
abelian groups. In particular, it was shown that the multiset-reconstruction 
number of $\Z_2^n$ was $n+1$. This provides an upper bound of $n+1$
to the reconstruction number of $\Z_2^n$. The author also showed a lower bound of $\floor{\frac{n+1}2}$ 
in the same paper.

The purpose of this note is to close the gap. The reconstruction number of 
$\Z_2^n$ is \[\floor{n+1-\log_2(n+1-\log_2(n))}.\]
\end{abstract}
\end{frontmatter}

\section{Introduction and Definitions}
Given a set $S$ and non-negative integer $k$, denote by $S^{(k)}$ the set of subsets of $S$ of size $k$. 
Given an action on a group $G$ of a set $S$, the {\em $k$-deck} 
of a subset $T$ is the multiset of all subsets of $T$ 
of size $k$, each given up to translation by 
$G$: \[\deck_{k}(T)=\{\{gU:g\in G\}:U\in T^{(k)}\}.\]

Say that subsets $T_1$ and $T_2$ are {\em $k$-indistinguishable} if 
$\deck_i(T_1)=\deck_i(T_2)$ for all $i\le k$ and 
{\em $k$-distinguishable} otherwise. In particular,
for any subset $T$ of $G$, any element $g$ of $G$ and any integer $k$, 
$gT$ and $T$ are $k$-indistinguishable and sets $T_1$ and $T_2$ are
$k$-indistinguishable if and only for all $i\le k$, there is a bijection
$\phi_i:T_1^{(i)}\to T_2^{(i)}$
such that for all $U\in T_1^{(i)}$ there exists a $g$ such that $\phi_iU=gU$.

For subsets $T_1$ and $T_2$ say that the {\em distinguishing number}
of $T_1$ and $T_2$, denoted by $d_{G:S}(T_1,T_2)$, is the 
smallest number $k$
such that $T_1$ and $T_2$ are $k$-distinguishable. If there is no such
$k$ (and so $T_1$ and $T_2$ are translates), the distinguishing
number is $\infty$.

For a subset $T$ of $S$, say that the 
{\em reconstruction number} $r_{G:S}(T)$ is 
the 
smallest number 
$k$ such that for any subset $U$ of $S$, if $T$ and $U$ are 
$k$-indistinguishable then $U$ is a translate $gT$ of $T$, and the 
{\em reconstruction number}
$r(G:S)$ is the maximum value of $r_{G:S}(T)$ for any 
subset $T\subseteq S$.

One can extend the definition of reconstruction from sets to multisets.
Let a {\em multiset} from $S$ be a map $\phi:S\to\N$ from $S$ to
the non-negative integers $\N$, and then define 
the {\em $k$-deck} of 
$\phi$ to be the map
$\deck_k\phi:S^k\to\N$ defined by
\[\deck_k\phi(s_1,s_2,\ldots,s_k)
=\sum_{g\in G}\phi(gs_1)\phi(gs_2)\ldots\phi(gs_k).\]

Given this definition of a $k$-deck, we can define $k$-distinguishable and 
$k$-indistinguishable as before, and for a multiset $\phi$ from $G$,
say that the multiset reconstruction number $\rzh_{G:S}(\phi)$ is the
smallest number $k$ such that for any multiset $\psi$ from $G$, 
if $\phi$ and $\psi$ are $k$-indistinguishable, then  $\psi$
is a translate $g\phi$ (defined by $g\phi(s)=\phi(g^{-1}s)$) of $\phi$. 
Finally, define the multiset reconstruction number $\rzh(G:S)$ to be the 
maximum value of $\rzh_{G:S}(\phi)$ for any multiset $\phi$ from $G$.

One of the first general results in this area was in~\cite{ACKR}, 
where the authors showed that the reconstruction number of
a group action $r(G:S)$ was bounded above by $\log_2(|G|)$. 

In this paper we will focus on the action of an abelian group acting 
on itself by multiplication. For the specific case of the cyclic group acting on itself by 
multiplication, it was proved in~\cite{RS} that the
reconstruction number of $r(Z_n)$ was at most equal to
9 times the number of prime factors of $n$ and was equal to 
3 for prime $n$. This was improved upon 
in~\cite{CPC:257539} where the multiset
reconstruction number was calculated for every abelian
group and, thereby, it was shown that 
$r(\Z_n)\le 6$.

We will focus on the group
$\Z_2^n$ which we will identify both, where necessary, with the 
$n$-dimensional vector
space over the field of 2 elements and with the power set of the
$n$ element set.
In~\cite{CPC:257539}, it was shown that
the multiset reconstruction number of $\Z_2^n$ is $n+1$, from 
which it follows that
$r(\Z_2^n)\le n+1$. In the same paper, a lower bound was
provided of $\frac{n+1}2$. In this paper we will show that the 
reconstruction number is equal to $\floor{n+1-\log_2(n+1-\log_2(n))}.$
This expression may seem a tad unwieldy, but comes from the following fact.

\begin{thm}\label{T:whythatexpression}
For positive integers $n$ and $k$, the statements
\[k\le\floor{n+1-\log_2(n+1-\log_2(n))}\]
and
\[2^{n+1-k}\ge k\]
are equivalent.
\end{thm}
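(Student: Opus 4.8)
The plan is to strip away the floor and the nested logarithms, reducing the claimed equivalence to a single clean inequality between two integer-related quantities, and then settle that inequality by a short case analysis. Since $k$ is an integer, $k\le\floor{x}$ is the same as $k\le x$ for any real $x$, so the first statement is equivalent to $k\le n+1-\log_2(n+1-\log_2 n)$. Rearranging and applying the increasing map $2^{(\cdot)}$ to both sides turns this into $n+1-\log_2 n\le 2^{n+1-k}$, whereas the second statement $2^{n+1-k}\ge k$ already has this shape. Writing $M=2^{n+1-k}$, the theorem becomes the assertion that $k$ and $n+1-\log_2 n$ lie on the same side of $M$, that is,
\[k\le 2^{n+1-k}\iff n+1-\log_2 n\le 2^{n+1-k}.\]

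I would prove the two implications separately. One direction is soft: if $k\le M$, then either $k\ge n+1-\log_2 n$, in which case $M\ge k\ge n+1-\log_2 n$ at once, or else $k<n+1-\log_2 n$, which forces $\log_2 n<n+1-k$ and hence $n<M$; as $M$ is then an integer exceeding $n$, we get $M\ge n+1\ge n+1-\log_2 n$. The degenerate range $k>n+1$, where $M<1$, makes both sides false and can be discarded at the outset, so throughout this direction $M$ is automatically a positive integer.

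For the reverse implication it is cleanest to substitute $j=n+1-k$, so that the goal becomes: if $j\ge0$ is an integer with $2^j\ge n+1-\log_2 n$, then $2^j+j\ge n+1$. When $2^j\ge n$ this is essentially free. The real work, and the main obstacle, is the case $2^j\le n-1$, where the hypothesis pins $P:=2^j$ as the largest power of two below $n$, sitting in the narrow window $[\,n+1-\log_2 n,\;n-1\,]$. I would resolve this by a \emph{doubling argument}: the hypothesis $P\ge n+1-\log_2 n$, combined with the elementary bound $2\log_2 n\le n+1$, yields $2P\ge n+1$; on the other hand, if the conclusion failed, i.e.\ $2^j+j<n+1$, then integrality would give $P\le n-j$ and hence $2^{j+1}\le n$, that is $2P\le n$ --- a contradiction. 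This closes the hard case and completes the equivalence.
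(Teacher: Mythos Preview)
Your argument is correct, though one step is compressed enough to cause a stumble: in the hard case you write ``$P\le n-j$ and hence $2^{j+1}\le n$,'' which is not a consequence of $P\le n-j$ alone. What you are really doing is combining $P\le n-j$ with the standing hypothesis $P\ge n+1-\log_2 n$ to get $n-j\ge n+1-\log_2 n$, i.e.\ $j+1\le\log_2 n$, i.e.\ $2^{j+1}\le n$; this then contradicts the doubling bound $2P\ge n+1$. Spelling that chain out would make the paragraph self-contained.

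Your route is genuinely different from the paper's. The paper introduces an auxiliary integer $t$ characterised by $2^t+t\le n<2^{t+1}+t+1$ and then shows, separately, that each of the two conditions in the theorem is equivalent to $k\le n-t$; the parameter $t$ acts as a common anchor and the two halves are short monotonicity checks. You instead strip the floor and the outer logarithm to recast the first condition as $n+1-\log_2 n\le 2^{n+1-k}$ and then argue directly that this is equivalent to $k\le 2^{n+1-k}$, using a case split and the elementary estimate $2\log_2 n\le n+1$. The paper's version is slicker because $t$ absorbs all the threshold behaviour at once; yours avoids naming any auxiliary quantity and works purely with inequalities, at the cost of a slightly longer case analysis and the doubling trick.
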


\begin{proof}
Let us suppose that $t$ is the unique non-negative integer that 
$2^t+t\le n<2^{t+1}+t+1$. Then if we set $k=n-t$, we see that 
$2^{n+1-k}=2^{t+1}\ge n-t=k$, but if we set $k=n+1-t$, we
see that $2^{n+1-k}=2^t<n+1-t=k+1$. Since $2^{n+1-k}$ is decreasing in 
$k$, it follows that $2^{n+1-k}\ge k$ if and only if $k\le n-t$.

Further, if $2^t+t\le n<2^{t+1}$, then $\log_2(n)$ is between $t$
and $t+1$, so $n+1-\log_2(n)$ is between $2^t$ and $2^{t+1}$. 
Similarly, if $2^{t+1}\le n<2^{t+1}+t+1$, then $\log_2(n)$ is between
$t+1$ and $t+2$, so $n+1-\log_2(n)$ is between $2^t$ and $2^{t+1}$.

It follows that $\floor{n+1-\log_2(n+1-\log_2(n))}=n-t$.
\end{proof}

In Section~\ref{S:lower}, we will show that if $2^{n+1-k}\ge k$ then
the reconstruction number of $\Z_2^n$ is at least $k$, and in
Sections~\ref{S:upper} and~\ref{S:upper2}, we will show the converse. 

\section{Lower Bound}\label{S:lower}
To prove our lower bound, we provide a method for creating
sets which are not easily distinguishable.

\begin{thm}\label{T:construction1}
Suppose that $G$ contains a subgroup $H$, and that
set $A$ is the union of sets 
$A_1, \ldots, A_k$ such that for each $i$, 
$A_i$ is contained in a distinct coset $g_iH$ of $H$.

Suppose likewise that set $B$ is the union of sets
$B_1, \ldots, B_k$ such that for each $i$,
$B_i$ is contained in a distinct coset $g'_iH$ of $H$.

Suppose finally that 
for each $1\le i\le k$, $A\setminus A_i$ and $B\setminus B_i$
are translates. Then $A$ and $B$ are not $k-1$-distinguishable.
\end{thm}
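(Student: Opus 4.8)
The plan is to prove directly that $\deck_j(A)=\deck_j(B)$ for every $j\le k-1$ by comparing, translation-class by translation-class, how many $j$-subsets of $A$ and of $B$ fall into each class. The engine is a pigeonhole observation together with an invariance fact. First, fix $j\le k-1$. Since $A=A_1\cup\cdots\cup A_k$ is a \emph{disjoint} union into $k$ blocks (the $A_i$ lie in distinct cosets of $H$), any $U\in A^{(j)}$ meets at most $j\le k-1$ of the blocks, so $U\subseteq A\setminus A_i$ for at least one $i$; the same holds for every $V\in B^{(j)}$. Thus every $j$-subset I must account for already lies inside one of the sets $A\setminus A_i$ (resp.\ $B\setminus B_i$), and by hypothesis $A\setminus A_i$ and $B\setminus B_i$ are translates.

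Next I record the relevant invariant. For a finite $W$ let $c(W)$ be the number of distinct cosets of $H$ that $W$ meets. Two elements $x,y$ lie in the same coset exactly when $x^{-1}y\in H$, and this condition is unchanged if $W$ is replaced by a translate $gW$ (for then $(gx)^{-1}(gy)=x^{-1}y$), so the partition of $W$ into coset-classes, and in particular $c(W)$, depends only on the translation class $O$ of $W$; write it $c(O)$. For $U\in A^{(j)}$ the blocks met are precisely the $A_i$ with $U\cap A_i\neq\emptyset$, so the number of indices $i$ with $U\subseteq A\setminus A_i$ is exactly $k-c(O)$, where $O$ is the class of $U$; and since $c(O)\le j\le k-1$ we have $k-c(O)\ge 1$. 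The identical statement holds for $B$.

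Now the double count. For each $i$ and each class $O$ let $f_A(i,O)$ be the number of $j$-subsets of $A\setminus A_i$ in class $O$, and define $f_B(i,O)$ analogously. Counting incidences ``$U\subseteq A\setminus A_i$'' in two ways and using that the block-weight $k-c(O)$ is constant across each class gives
\[\sum_{i=1}^k f_A(i,O)=(k-c(O))\cdot\#\{U\in A^{(j)}:U\in O\},\]
and likewise for $B$. Because $A\setminus A_i$ and $B\setminus B_i$ are translates, the multisets of translation classes of their $j$-subsets coincide, so $f_A(i,O)=f_B(i,O)$ for every $i$ and every $O$. Hence the two left-hand sides agree for each $O$, and since $k-c(O)\ge 1$ we may divide to obtain
\[\#\{U\in A^{(j)}:U\in O\}=\#\{V\in B^{(j)}:V\in O\}.\]
As $O$ ranges over all classes this is precisely $\deck_j(A)=\deck_j(B)$, and letting $j$ range over $0,\ldots,k-1$ shows $A$ and $B$ are not $(k-1)$-distinguishable.

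The only genuinely delicate point is the invariance step: that $c(\cdot)$ is a translation invariant that records exactly the number of missed blocks, so that the weight $k-c(O)$ factors uniformly out of the double count over each class. Everything else is bookkeeping, and the pigeonhole bound $c(O)\le j\le k-1$ is what guarantees the weight is nonzero and hence that the division is legitimate. I would take care to phrase the coset-incidence invariance via $x^{-1}y\in H$ so that it is valid for the general group $G$ in the statement, even though only the abelian case $\Z_2^n$ is needed later.
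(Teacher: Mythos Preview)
Your proof is correct and follows essentially the same route as the paper's: both arguments double-count the $j$-subsets (or, in the paper, $i$-tuples) lying in $A\setminus A_i$, use that the number of blocks missed by any such configuration equals $k$ minus the translation-invariant ``number of cosets met,'' and then cancel this nonzero factor after invoking the translate hypothesis. The only cosmetic difference is that the paper packages the count via the function $f(S)=\#\{g:gg_1,\ldots,gg_i\in S\}$ for a fixed tuple, whereas you work class-by-class directly with the subset definition of the deck; your explicit verification that $c(\cdot)$ is translation-invariant (via $x^{-1}y\in H$) is a nice touch that the paper leaves implicit.
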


\begin{proof}
Let $T$ be a subset of $G$ of size smaller than $k$,
and for set $S\subseteq G$ denote by $f(S)$ the number of elements
$g$ of $G$ for which $gT\subseteq S$.

Note that $f(S)$ counts the number of occurrences of the orbit 
$\{gT:g\in G\}$ of $T$ in $\deck_{|T|}(S)$. To show that $A$ and $B$
are not $k-1$-distinguishable, it is therefore sufficient to show that for any such $T$, 
$f(A)=f(B)$.

Let $t$ be the number of distinct cosets of $H$ in which 
the elements of $T$ lie. Note that $t\le |T|<k$. Then for 
any $g$, the elements of $gT$ lie
in $t$ cosets of $H$. Therefore 
each translate $gT$
contained in $A$ is contained in exactly $k-t$ 
of the sets $A\setminus A_j$, so
$(k-t)f(A)=\sum_i f(A\setminus A_i)=\sum_i f(B\setminus B_i)=(k-t)f(B)$
and hence $f(A)=f(B)$.

Since this is true for all sets $T$ of size less than $k$, it
follows that $A$ and $B$ are not $(k-1)$-distinguishable.
\end{proof}

Now we show how to find sets satisfying the properties of 
Theorem~\ref{T:construction1}
\begin{cor}\label{C:construction2}
For positive integer $k\ge 3$, suppose that abelian group $G$ contains a subgroup $H$ of 
index at least $k$.

Suppose also that subgroup $H$ has $k$ subgroups $H_1, H_2, \ldots, H_k$ with cosets 
$h_1H_1, h_2H_2, \ldots, h_kH_k$ such that the cosets have an empty intersection, but the intersection of any $k-1$ of them
is not empty.

Then the reconstruction number of $G$ is at least $k$.
\end{cor}

\begin{proof}
Since $\bigcap\limits_ih_iH_i$ is different from 
$\bigcap\limits_{i\ne j}h_iH_i$ for all $j$, it follows that
the cosets $h_iH_i$ are distinct. Further, since for all pairs
$i, j$, $h_iH_i\cap h_jH_j\ne\emptyset$ and cosets of a single subgroup are equal or 
disjoint, it follows that the subgroups $H_i$ and $H_j$ are distinct.

Choose $k$ distinct cosets 
$g_1H, g_2H, \ldots, g_kH$ of $H$ in $G$, and then for $1\le i\le k$, 
let $A_i=g_iH_i$ and let $B_i=g_ih_iH_i$. Finally let
$A$ be the disjoint union of the $A_i$ and $B$ be the disjoint union of 
the $B_i$. 

Choose $1\le i\le k$, and let $x$ be any element of 
$\bigcap\limits_{j\ne i}h_jH_j$. Then for all $j\ne i$, 
$xA_j=xg_jH_j=g_jxH_j=g_jh_jH_j=B_j$, and so
$x(A\setminus A_i)=\bigcup_{j\ne i}xA_j=\bigcup_{j\ne i}B_j=B\setminus B_i.$

Thus the sets $A_i$ and $B_i$ satisfy the condition
of Theorem~\ref{T:construction1}, and hence $A$ and $B$ are not
$k-1$-distinguishable. Further, we will show that
$A$ and $B$ are not translates.

Suppose otherwise, 
then there 
exists $g\in G$ such that $gA=B$. Since cosets of $H$ are 
preserved by
the map $x\to gx$, it follows that this map must map 
each $A_i$ to some $B_j$.
However, if $i\ne j$ then $A_i$ and $B_j$ are cosets
of different subgroups $H_i$ and $H_j$ which means 
one cannot be mapped to the other. 

Thus for each
$i$, it follows that $gg_iH_i=g_ih_iH_i$, from which it follows (since $G$ is 
abelian)
that $g\in h_iH_i$ for all $i$. This is a contradiction, as
\[\bigcap\limits_ih_iH_i=\emptyset.\]

It follows that $A$ and $B$ are not $k-1$-distinguishable,
and are not translates, so $G$ must have reconstruction number of at least $k$.
\end{proof}

To conclude we will show the existence of cosets with this
intersection property.

\begin{lem}\label{L:construction3}
The hypercube $\Z_2^{k-1}$ contains $k$ cosets of subgroups
with empty intersection, such that the intersection of any $k-1$ of them
is not empty.
\end{lem}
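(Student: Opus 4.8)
The plan is to realise all $k$ cosets as affine hyperplanes, i.e.\ as solution sets of single linear equations over $\mathbb{F}_2$, and thereby to reduce the two intersection conditions to a clean statement in linear algebra. I would write each desired coset as $\{x\in\Z_2^{k-1}:\langle a_i,x\rangle=b_i\}$ for a nonzero normal vector $a_i\in\Z_2^{k-1}$ and a scalar $b_i\in\Z_2$; this is precisely a coset of the index-two subgroup $\ker\langle a_i,\cdot\rangle$, so it has the form $h_iH_i$ required by Corollary~\ref{C:construction2}. The intersection of any subcollection indexed by $I$ is then exactly the solution set of the linear system $\{\langle a_i,x\rangle=b_i:i\in I\}$, so the whole problem becomes one of arranging consistency and inconsistency of linear systems.

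The conceptual key is the Fredholm alternative over $\mathbb{F}_2$: such a system is inconsistent exactly when some $\mathbb{F}_2$-linear combination of its rows satisfies $\sum_{i\in I}\lambda_i a_i=0$ while $\sum_{i\in I}\lambda_i b_i=1$. I would therefore engineer the normals $a_1,\dots,a_k$ so that they form a \emph{circuit}, a minimal linearly dependent set: the whole set is dependent but every proper subset is independent. Concretely I would take $a_i=e_i$ for $1\le i\le k-1$ and $a_k=e_1+\cdots+e_{k-1}$. The map $\lambda\mapsto\sum_i\lambda_i a_i$ from $\mathbb{F}_2^k$ to $\Z_2^{k-1}$ has a one-dimensional kernel, spanned by the all-ones vector, so the unique nontrivial dependency is $\sum_{i=1}^k a_i=0$.

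I would then choose the scalars $b_i$ to have odd total parity, for instance $b_1=1$ and $b_2=\cdots=b_k=0$. Applying the all-ones combination to the full system gives $\sum_i a_i=0$ but $\sum_i b_i=1$, so by the Fredholm criterion the full system is inconsistent and $\bigcap_i h_iH_i=\emptyset$. For the complementary direction, deleting any single index $j$ leaves $k-1$ of the $a_i$; since $\{a_1,\dots,a_k\}$ is a circuit these $k-1$ vectors are linearly independent, hence a basis of $\Z_2^{k-1}$, so the reduced coefficient matrix is invertible and the reduced system has a (unique) solution. Thus $\bigcap_{i\neq j}h_iH_i\neq\emptyset$ for every $j$, as required.

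The one point that genuinely needs to be nailed down is the circuit property, namely that deleting any single normal leaves a basis. For the removal of $a_k$ this is immediate, since $a_1,\dots,a_{k-1}$ is the standard basis; for the removal of some $a_j$ with $j<k$ one checks that $a_k$ together with the surviving standard vectors still spans, by recovering $e_j=a_k+\sum_{i\neq j,\,i<k}e_i$. Once this observation and the single parity computation $\sum_i b_i=1$ are in hand, the lemma follows directly from the Fredholm alternative.
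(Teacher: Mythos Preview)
Your argument is correct and follows essentially the same approach as the paper: both constructions take the $k$ cosets to be affine hyperplanes (cosets of index-two subgroups) arranged so that the corresponding linear system over $\mathbb{F}_2$ is minimally inconsistent. The paper writes down the explicit chain $x_1=0,\ x_1=x_2,\ \ldots,\ x_{k-2}=x_{k-1},\ x_{k-1}=1$ and observes the result by inspection, whereas you choose the normals $e_1,\dots,e_{k-1},\,e_1+\cdots+e_{k-1}$ with odd-parity right-hand side and phrase the verification via the circuit property and the Fredholm alternative; these are just two concrete instances of the same idea.
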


\begin{proof}
Representing the elements of $\Z_2^{k-1}$ as sequences $(x_1, x_2,
\ldots,x_{k-1})$ of elements of $\Z_2$, we can take our $k$ cosets
to be $\{x: x_1=0\}, \{x: x_1=x_2\}, \{x: x_2=x_3\}, \ldots, 
\{x: x_{k-2}=x_{k-1}\}$ and $\{x: x_{k-1}=1\}$.

Clearly no vector satisfies all of these conditions, but if
you remove any one condition, the rest can be satisfied.
\end{proof}

This concludes the work for the lower bound.

\begin{cor}\label{C:construction4}
If positive integers $n, k$ have $2^{n+1-k}\ge k$ then the 
reconstruction number of $\Z_2^{n}$ is at least $k$.
\end{cor}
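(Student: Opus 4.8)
The plan is to obtain the corollary as a direct bookkeeping combination of Corollary~\ref{C:construction2} and Lemma~\ref{L:construction3}, by exhibiting inside $G=\Z_2^n$ a subgroup $H$ playing the role those results demand. The natural candidate is a copy of $\Z_2^{k-1}$ sitting inside $\Z_2^n$, for instance the subgroup $H$ spanned by the first $k-1$ coordinate directions. First I would check that $H$ is well-defined: the hypothesis $2^{n+1-k}\ge k\ge 1$ forces $n+1-k\ge 0$, i.e.\ $k-1\le n$, so a subgroup $H\cong\Z_2^{k-1}$ exists, and its index in $\Z_2^n$ is exactly $\lvert G\rvert/\lvert H\rvert=2^{n}/2^{k-1}=2^{n+1-k}$.

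With $H$ in hand, the two hypotheses of Corollary~\ref{C:construction2} fall into place. The index condition is immediate: the hypothesis $2^{n+1-k}\ge k$ says precisely that $[G:H]\ge k$, so $k$ distinct cosets $g_1H,\ldots,g_kH$ can be chosen in $G$. For the coset-intersection condition, I would invoke Lemma~\ref{L:construction3}: since $H\cong\Z_2^{k-1}$, it contains $k$ cosets $h_iH_i$ of subgroups whose total intersection is empty while every $(k-1)$-fold intersection is nonempty. These are exactly the data Corollary~\ref{C:construction2} requires, so for $k\ge 3$ that corollary yields $r(\Z_2^n)\ge k$ at once. (Note $k\ge 3$ already forces $2^{n+1-k}\ge 3>1$, hence $k\le n$ and $H$ is a \emph{proper} subgroup, consistent with $[G:H]\ge k>1$.)

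The only cases outside the range of Corollary~\ref{C:construction2} are $k=1$ and $k=2$, which I would dispatch by hand. For $k=1$ the bound $r(\Z_2^n)\ge 1$ holds because the $0$-deck of every subset is identical (it records only the empty subset), so any two non-translate subsets are $0$-indistinguishable, and such subsets exist once $n\ge 1$ (e.g.\ $\emptyset$ and $\{0\}$). For $k=2$ the hypothesis reads $2^{n-1}\ge 2$, i.e.\ $n\ge 2$; here two subsets are $1$-indistinguishable exactly when they have equal cardinality, so it suffices to exhibit two equal-sized subsets that are not translates, such as $\{0,e_1\}$ and $\{0,e_1+e_2\}$, whose difference sets differ and which are therefore not translates. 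I do not expect a genuine obstacle: the substance lies entirely in identifying $H$ with $\Z_2^{k-1}$ and reading off that $[G:H]=2^{n+1-k}$, and the only points needing real care are confirming that $H$ has the stated index (so the $k$ cosets $g_iH$ can be chosen) and covering the two small cases the cited corollary omits.
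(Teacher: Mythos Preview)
Your proposal is correct and follows essentially the same approach as the paper: for $k\ge 3$ you take $H\cong\Z_2^{k-1}$ inside $\Z_2^n$, observe its index is $2^{n+1-k}\ge k$, and feed Lemma~\ref{L:construction3} into Corollary~\ref{C:construction2}; for $k\le 2$ you argue directly with explicit small examples. Your handling of the small cases is in fact a bit more careful than the paper's (you correctly separate $k=1$, where the hypothesis is vacuous for $n\ge 1$, from $k=2$, where it forces $n\ge 2$), and your check that $k-1\le n$ so that $H$ actually fits inside $\Z_2^n$ is a worthwhile sanity step the paper leaves implicit.
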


\begin{proof}
For $k\ge 3$, this follows directly from Corollary~
\ref{C:construction2} and
Lemma~\ref{L:construction3}. $\Z_2^{k-1}$ is a subset
of $\Z_2^n$ of index $2^{n+1-k}$, so if this is at least $k$,
the conditions of Lemma~\ref{L:construction3} apply.

For $k=1$, the condition is $2^n\ge 1$, which is equivalent to 
$n\ge 0$. Clearly, for any $n$, the 0-deck does not distinguish
between $\{\}$ and $\Z_2^n$ which are distinct, so $\Z_2^n$ has
reconstruction number at least 1.

For $k=2$, the conditions is that 
$2^{n-1}\ge 2$, which is 
equivalent
to $n\ge 2$. 

For $n\ge 2$ and for any two non-zero elements $a, b$ of $\Z_2^n$ the sets 
$\{0,a\}$ and $\{0,b\}$ are non-translates that are not 1-distinguishable
so $\Z_2^n$ has reconstruction number at least 2.
\end{proof} 

\section{The Fourier Transform}\label{S:upper}
Given elements $x=(x_1,x_2,\ldots,x_n)$ and $y=(y_1,y_2,\ldots,y_n)$ 
of $\Z_2^n$, 
define $\ip xy=x_1y_1+x_2y_2+\ldots+x_ny_n$. Then, given a 
mapping $f:\Z_2^n\to\Z$ the Fourier Transform $\hat{f}:\Z_2^n\to\Z$ 
is defined by $\hat{f}(x)=\sum_{y\in\Z_2^n}f(y)(-1)^{\ip xy}$. 

Given a multiset $f$ from $\Z_2^n$ and a linear map 
$\theta:\Z_2^n\to\Z_2^k$, denote by $\theta f$ the multiset from
$\Z_2^k$ defined by
\[\theta f(x)=\sum_{z:\theta z=x}f(z).\]

For every linear map $\theta:\Z_2^n\to\Z_2^k$, there is a dual map
$\theta^*:\Z_2^k\to\Z_2^n$ with the property that for every 
$x\in\Z_2^n$ and $y\in\Z_2^k$, $\ip{\theta x}y=\ip x{\theta^* y}$. The dual
map gives a clean description of the Fourier Transform of linear images
of multisets.

\begin{lem}\label{L:duals}
Given a multiset $f$ from $\Z_2^n$ and a linear map
$\theta:\Z_2^n\to\Z_2^k$ with dual $\theta^*$, the Fourier Transform
of the image $\theta f$ is given by
\[\widehat{\theta f}(x)=\hat{f}(\theta^*x).\]
\end{lem}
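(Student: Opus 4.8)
The plan is to compute both sides directly from the definitions and check they agree. By definition of the Fourier transform applied to the image multiset $\theta f$, we have $\hat{\theta f}(x)=\sum_{y\in\Z_2^k}\theta f(y)\langle x,y\rangle$. First I would substitute the definition of the pushforward, $\theta f(y)=\sum_{z:\theta z=y}f(z)$, so that the single sum over $y$ together with the inner sum over the fibre $\{z:\theta z=y\}$ collapses into a single unconstrained sum over all $z\in\Z_2^n$. This yields
\[\hat{\theta f}(x)=\sum_{z\in\Z_2^n}f(z)\langle x,\theta z\rangle,\]
since for each $z$ its image $\theta z$ is the unique $y$ contributing the factor $\langle x,y\rangle$.

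The key step is then to invoke the defining property of the dual map. By the stated characterization of $\theta^*$, we have $\langle x,\theta z\rangle=\langle\theta^* x,z\rangle$ for every $z$ (using the symmetry of the pairing on $\Z_2^n$, equivalently reading the duality relation in the appropriate direction). Substituting this gives
\[\hat{\theta f}(x)=\sum_{z\in\Z_2^n}f(z)\langle\theta^* x,z\rangle,\]
and the right-hand side is exactly the definition of $\hat f$ evaluated at the point $\theta^* x$, namely $\hat f(\theta^* x)$. This completes the identification $\hat{\theta f}(x)=\hat f\,\theta^* x$.

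I do not anticipate a genuine obstacle here, as the lemma is essentially a bookkeeping identity: the only place requiring care is matching the direction of the pairing. The duality property as stated reads $\langle\theta u,v\rangle=\langle u,\theta^* v\rangle$ with $u\in\Z_2^n$ and $v\in\Z_2^k$; since our pairing is symmetric (it depends only on the dot product of coordinate vectors), I would make sure to apply it with the roles arranged so that the $\theta$ acting on the summation variable $z$ transfers onto $x$ as $\theta^* x$. Once this index management is done cleanly, the proof is a two-line substitution with no estimation, no case analysis, and no appeal to the deeper structure of $\Z_2^n$.
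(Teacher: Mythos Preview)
Your proposal is correct and follows exactly the same computation as the paper: expand the definition of $\hat{\theta f}(x)$, substitute the pushforward formula for $\theta f$, collapse the double sum to a single sum over $z\in\Z_2^n$, and then apply the defining property of $\theta^*$ (together with symmetry of the pairing) to recognize the result as $\hat f(\theta^* x)$. There is no substantive difference between your argument and the paper's.
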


\begin{proof}
\begin{align*}
\widehat{\theta f}(x)
&=\sum_{y\in\Z_2^k}\theta f(y)(-1)^{\ip xy}\\
&=\sum_{y\in\Z_2^k}\sum_{z:\theta z=y}f(z)(-1)^{\ip xy}\\
&=\sum_{z\in\Z_2^n}f(z)(-1)^{\ip x{\theta z}}\\
&=\sum_{z\in\Z_2^n}f(z)(-1)^{\ip{\theta^*x}z}\\
&=\hat{f}(\theta^*x).
\end{align*}
\end{proof}

The effect of translation on the Fourier Transform is similarly 
easy to describe.
\begin{lem}\label{L:translates}
Given a multiset $f$ from $\Z_2^n$ and an element
$z\in\Z_2^n$, the Fourier Transform of the translate $z+f$
is given by
\[\widehat{(z+f)}(x)=\hat{f}(x)(-1)^{\ip xz}.\]
\end{lem}

\begin{proof}
\begin{align*}
\widehat{(z+f)}(x)
&=\sum_{y\in\Z_2^n}(z+f)(y)(-1)^{\ip xy}\\
&=\sum_{y\in\Z_2^n}f(z+y)(-1)^{\ip xy}\\
&=\sum_{y'\in\Z_2^n}f(y')(-1)^{\ip x{z+y'}}\\
&=\sum_{y'\in\Z_2^n}f(y')(-1)^{\ip xz}(-1)^{\ip x{y'}}\\
&=(-1)^{\ip xz}\sum_{y'\in\Z_2^n}f(y')(-1)^{\ip x{y'}}\\
&=(-1)^{\ip xz}\hat{f}(x).
\end{align*}
\end{proof}

It is proved in~\cite{CPC:257539} that the reconstruction number of a given
multiset can be described
in terms of the Fourier Transform.
\begin{thm}\label{T:fourier}
The $k$-deck of $f$ gives exactly the same information as knowing all values 
$\prod_{i=1}^k\hat{f}(x_i)$ 
for all sequences $(x_1,x_2,\ldots,x_k)$ with $x_1+x_2+\ldots+x_k=0$.
\end{thm}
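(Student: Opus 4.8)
The plan is to compute the $k$-dimensional Fourier transform of the $k$-deck, viewed as a function on the product group $(\Z_2^n)^k$, and to show that this transform is supported on the subspace $\{(x_1,\ldots,x_k):x_1+\cdots+x_k=0\}$, where its values are precisely $2^n\prod_{i=1}^k\hat{f}(x_i)$. Since the Fourier transform on $(\Z_2^n)^k$ is invertible (applying it twice multiplies by $2^{nk}$, just as the one-factor identity $\hat{\hat f}=2^nf$ recalled above), knowing the $k$-deck is equivalent to knowing this transform, and hence to knowing the products on that subspace. That equivalence is exactly the assertion of the theorem.

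Write $D=\deck_k f$, so that $D(s_1,\ldots,s_k)=\sum_{g\in\Z_2^n}\prod_{i=1}^k f(g+s_i)$. First I would observe that $D$ is invariant under the diagonal translation $(s_1,\ldots,s_k)\mapsto(s_1+h,\ldots,s_k+h)$, which by Lemma~\ref{L:translates} applied in each coordinate already forces its transform to vanish whenever $x_1+\cdots+x_k\ne 0$, since a diagonal shift multiplies $\hat D(x_1,\ldots,x_k)$ by $<x_1+\cdots+x_k,h>$. To pin down the values I would then compute directly: the transform at $(x_1,\ldots,x_k)$ is $\sum_{s_1,\ldots,s_k}D(s_1,\ldots,s_k)\prod_i<x_i,s_i>$, into which I substitute the inner sum over $g$ and, for each fixed $g$, change variables by the bijection $t_i=g+s_i$, using bilinearity of the pairing to split $<x_i,g+t_i>=<x_i,g><x_i,t_i>$.

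After this substitution the sum factors as $\bigl(\sum_{g}<x_1+\cdots+x_k,\,g>\bigr)\prod_{i=1}^k\bigl(\sum_{t_i}f(t_i)<x_i,t_i>\bigr)$. The inner sums are exactly $\hat f(x_i)$, and the character orthogonality relation $\sum_{g\in\Z_2^n}<y,g>=2^n\1[y=0]$ collapses the leading factor. This yields $\hat D(x_1,\ldots,x_k)=2^n\,\1[x_1+\cdots+x_k=0]\prod_{i=1}^k\hat f(x_i)$, confirming both the support claim and the stated values.

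The computation is routine once the change of variables is in place; the only point needing care is the concluding equivalence-of-information step, since the theorem asserts a two-way determination rather than a one-directional formula. Here I would note that the coordinates of $\hat D$ off the subspace $\{x_1+\cdots+x_k=0\}$ are identically zero and so carry no information, while on the subspace $\hat D$ and $\prod_i\hat f(x_i)$ differ only by the nonzero scalar $2^n$. Combined with Fourier inversion on $(\Z_2^n)^k$, this shows the $k$-deck and the family of products each determine the other.
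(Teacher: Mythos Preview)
The paper does not supply its own proof of this theorem; it merely cites \cite{CPC:257539}. So there is no in-paper argument to compare against.

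That said, your argument is correct and is the natural one. The key computation
\[
\widehat{\deck_k f}(x_1,\ldots,x_k)=2^n\,\1[x_1+\cdots+x_k=0]\prod_{i=1}^k\hat f(x_i)
\]
goes through exactly as you describe: expand the definition of the deck, swap the order of summation, substitute $t_i=g+s_i$, and use the orthogonality relation $\sum_{g}\langle y,g\rangle=2^n\1[y=0]$. Your invocation of Lemma~\ref{L:translates} coordinatewise for the preliminary support observation is strictly speaking an extension of that lemma to the product group $(\Z_2^n)^k$, but it is immediate and in any case made redundant by the direct calculation. The equivalence-of-information step is handled correctly: Fourier inversion on $(\Z_2^n)^k$ recovers the deck from its transform, and the transform is determined by (and determines) the products on the zero-sum hyperplane up to the nonzero scalar $2^n$.
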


We can use this to show the effect of linear maps on distinguishability.

\begin{thm}\label{T:thisone}
For any multisets $f, g$ on $\Z_2^n$ and integer $k\ge 2$:
\begin{enumerate}
\item If $f$ and $g$ are $k$-indistinguishable then for any linear map $\theta$ from $\Z_2^n$ to any target space, $\theta f$ and $\theta g$
are $k$-indistinguishable.
\item If $f$ and $g$ are $k$-distinguishable, then there exists a linear
map $\theta:\Z_2^n\to\Z_2^{k-1}$ such that $\theta f$ and
$\theta g$ are $k$-distinguishable.
\end{enumerate}
\end{thm}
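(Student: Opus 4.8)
The plan is to leverage the Fourier-analytic characterization of $k$-decks from Theorem~\ref{T:fourier}, together with the clean transformation rule for linear images in Lemma~\ref{L:duals}. The $k$-deck of a multiset $f$ is equivalent to knowing $\prod_{i=1}^k\hat f(x_i)$ over all sequences summing to zero; so to compare decks we compare these Fourier products.

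For part~(1), I would argue as follows. Fix a linear map $\theta:\Z_2^n\to\Z_2^m$ with dual $\theta^*:\Z_2^m\to\Z_2^n$. By Theorem~\ref{T:fourier}, the $j$-deck of $\theta f$ (for each $j\le k$) is captured by the products $\prod_{i=1}^j\widehat{\theta f}(y_i)$ over sequences $(y_1,\ldots,y_j)$ in $\Z_2^m$ with $\sum y_i=0$. By Lemma~\ref{L:duals}, $\widehat{\theta f}(y_i)=\hat f(\theta^* y_i)$, so this product equals $\prod_{i=1}^j\hat f(\theta^* y_i)$. The key observation is that since $\theta^*$ is linear, $\sum_i y_i=0$ forces $\sum_i\theta^* y_i=\theta^*(\sum_i y_i)=0$. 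Hence $(\theta^* y_1,\ldots,\theta^* y_j)$ is a zero-sum sequence in $\Z_2^n$, and the corresponding product $\prod_i\hat f(\theta^* y_i)$ is exactly one of the quantities that the $j$-deck of $f$ determines. Since $f$ and $g$ are $k$-indistinguishable, all such products agree for $f$ and $g$ whenever $j\le k$; therefore every Fourier product defining the $j$-deck of $\theta f$ matches that of $\theta g$, and $\theta f$, $\theta g$ are $k$-indistinguishable.

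Part~(2) is the harder direction and where I expect the real work to lie. Since $f$ and $g$ are $k$-distinguishable but (we may assume) $(k-1)$-indistinguishable, there is a specific zero-sum sequence $(x_1,\ldots,x_k)$ in $\Z_2^n$ on which the Fourier products differ: $\prod_{i=1}^k\hat f(x_i)\ne\prod_{i=1}^k\hat g(x_i)$. The goal is to build a linear map $\theta:\Z_2^n\to\Z_2^{k-1}$ whose dual $\theta^*:\Z_2^{k-1}\to\Z_2^n$ hits exactly these $k$ points as images of a zero-sum sequence in the small space, so that the discrepancy survives the pushforward. The natural construction is to choose $\theta^*$ so that its image is the span of $x_1,\ldots,x_k$ and so that there exist $y_1,\ldots,y_k\in\Z_2^{k-1}$ with $\theta^* y_i=x_i$ and $\sum y_i=0$. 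The constraint $\sum x_i=0$ in $\Z_2^n$ means the $x_i$ span a space of dimension at most $k-1$, which is exactly why the target $\Z_2^{k-1}$ suffices: one can select $y_1,\ldots,y_{k-1}$ as a basis (or spanning set) for $\Z_2^{k-1}$ mapping onto a spanning set among the $x_i$, set $y_k=\sum_{i<k}y_i$, and define $\theta^*$ to send each $y_i\mapsto x_i$ consistently. The main obstacle is verifying that such a $\theta^*$ is well-defined as a linear map precisely when the only linear dependence forced among the $y_i$ (namely $\sum y_i=0$) is matched by the dependence $\sum x_i=0$ among the $x_i$; one must check that no extraneous relation among the chosen $y_i$ maps to a nonzero combination of the $x_i$. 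Once $\theta^*$ is constructed, Lemma~\ref{L:duals} gives $\widehat{\theta f}(y_i)=\hat f(x_i)$, so the zero-sum product $\prod_i\widehat{\theta f}(y_i)=\prod_i\hat f(x_i)$ differs from the corresponding product for $\theta g$, witnessing via Theorem~\ref{T:fourier} that $\theta f$ and $\theta g$ are $k$-distinguishable.
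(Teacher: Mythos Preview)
Your approach is essentially the paper's: use Theorem~\ref{T:fourier} to translate (in)distinguishability into equality of Fourier products over zero-sum tuples, and use Lemma~\ref{L:duals} to move between the large and small spaces via the dual map. Part~(1) is exactly right.

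For part~(2) there is one small gap. Your parenthetical ``we may assume $(k-1)$-indistinguishable'' is not justified, and the theorem must cover the case where the witnessing zero-sum sequence has length $i<k$. The paper handles this directly rather than by a reduction: it takes any witness $(x_1,\dots,x_i)$ with $i\le k$ and simply defines $\theta^*e_j=x_j$ for $j<i$ and $\theta^*e_j=0$ for $i\le j\le k-1$, then sets $y_j=e_j$ for $j<i$ and $y_i=e_1+\dots+e_{i-1}$. Note also that once you take $y_1,\dots,y_{k-1}$ to be the standard basis $e_1,\dots,e_{k-1}$, the map $\theta^*$ is automatically well-defined on all of $\Z_2^{k-1}$, and the only thing to check is $\theta^*(e_1+\dots+e_{i-1})=x_1+\dots+x_{i-1}=x_i$, which follows from $\sum_j x_j=0$; your worry about ``extraneous relations'' does not arise.
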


\begin{proof}
By Theorem~\ref{T:fourier}, $f$ and $g$ being $k$-indistinguishable is 
equivalent to 
\[\hat{f}(x_1)\ldots\hat{f}(x_i)=\hat{g}(x_1)\ldots\hat{g}(x_i)\]
for all sequences $x_1, \ldots, x_i$ with $i\le k$ and $x_1+\ldots+x_i=0$.

Suppose $f$ and $g$ are $k$-indistinguishable, and let
$\theta:\Z_2^n\to\Z_2^p$ be any linear map. Then by 
Lemma~\ref{L:duals}, for any sequence $x_1, \ldots, x_i\in\Z_2^p$ with 
$i\le k$ and $x_1+\ldots+x_i=0$,
\begin{align*}
\widehat{\theta f}(x_1)\ldots\widehat{\theta f}(x_i)
&=\hat{f}(\theta^*x_1)\ldots\hat{f}(\theta^*x_i)\\
&=\hat{g}(\theta^*x_1)\ldots\hat{g}(\theta^*x_i)\\
&=\widehat{\theta g}(x_1)\ldots\widehat{\theta g}(x_i),
\end{align*}
so $\theta f$ and $\theta g$ are $k$-indistinguishable.

Similarly, for $k$-distinguishable $f$ and $g$, there exists a sequence 
$x_1, \ldots, x_i$ with $i\le k$, $x_1+\ldots+x_i=0$ and 
\[\hat{f}(x_1)\ldots\hat{f}(x_i)\ne\hat{g}(x_1)\ldots\hat{g}(x_i).\]

Then
define map $\theta:\Z_2^n\to\Z_2^{k-1}$ in terms of its dual by 
$\theta^*e_j=x_j$ for $j<i$ and $\theta^*e_j=0$ for $j\ge i$. 
Note that $\theta^*(e_1+\ldots+e_{i-1})=x_i$.

Thus if we let $y_j=e_j$ for $j<i$ and $y_i=e_1+\ldots+e_{i-1}$, then
$y_1, \ldots,y_i$ are a sequence with $i\le k$, $y_1+\ldots+y_i=0$
and
\[\widehat{\theta f}(y_1)\ldots\widehat{\theta f}(y_i)\ne
\widehat{\theta g}(y_1)\ldots\widehat{\theta g}(y_i),\]
so $\theta f$ and $\theta g$ are $k$-distinguishable.
\end{proof}

\section{Structure of maximally indistinguishable multisets on $\Z_2^{k-1}$}
\label{S:upper2}
For this section, we will investigate the nature of pairs of
multisets from $\Z_2^{k-1}$ which have distinguishing number $k$. We
will give concrete examples of such multisets now.

For distinct non-negative integers $a, b$ and positive integers 
$a_1, \ldots, a_{k-1}$, denote by $f_{(a,b),(a_1,\ldots,a_{k-1})}$
the multiset defined by
\[
f((x_1, x_2, \ldots, x_{k-1}))=
\begin{cases}
a+\sum_ia_ix_i\textrm{ if }\sum x_i\textrm{ is even} \\
b+\sum_ia_ix_i\textrm{ if }\sum x_i\textrm{ is odd} \\
\end{cases}
\]

We note that the Fourier Transform of $f_{(a,b),(a_1,\ldots,a_{k-1})}$
takes a particularly simple form. Since the all-ones vector 
$e_1+e_2+\ldots+e_{k-1}$ will come up quite a lot this section,
denote it by $h$.

\begin{thm}\label{T:ftstandardform}
The Fourier Transform of $f_{(a,b),(a_1,\ldots,a_{k-1})}$ is given by
\[\hat{f}_{(a,b),(a_1,\ldots,a_{k-1})}(x)
=\begin{cases}
2^{k-2}(a+b+\sum a_i)\textrm{ if }x=0\\
2^{k-2}a_i\textrm{ if }x=e_i\\
2^{k-2}(a-b)\textrm{ if }x=h\\
0\textrm{ otherwise }
\end{cases}\]
\end{thm}

\begin{proof}
We can decompose $f_{(a,b),(a_1,\ldots,a_{k-1})}$ as the linear combination of 
$\frac{a+b}2$ parts of the function $x\to 1$, $\frac{a-b}2$ parts of 
the function $x\to(-1)^{\ip hx}$ and $a_i$ parts each of the function $x\to x_i$.

Each part has a Fourier Transform that is trivial to compute.

The Fourier Transform of $x\to 1$ is 
\[z\to\sum_{y\in\Z_2^{k-1}}(-1)^{\ip zy},\] which is $2^{k-1}$ for $z=0$ and
0 otherwise.

The Fourier Transform of $x\to(-1)^{\ip hx}$ is
\[z\to\sum_{y\in\Z_2^{k-1}}(-1)^{\ip hy\ip zy}=\sum_{y\in\Z_2^{k-1}}(-1)^{\ip {z+h}y},\]
which is $2^{k-1}$ for $z=h$ and 0 otherwise.

Finally the Fourier Transform of $x\to x_i$ is
\[z\to\sum_{y\in\Z_2^{k-1}}y_i(-1)^{\ip zy}.\]

If $z_j\ne 0$ with $j\ne i$ then the terms in the above expression for 
$y$ and $y+e_j$ cancel each other out, so the Fourier Transform is 0 outside of
$\{0,e_i\}$. For $z=0$ or $e_i$, the Fourier Transform is clearly $2^{k-2}$.

Combining these Fourier Transforms together gives the above form. 
\end{proof}

Then it follows that $f_{(a,b),(a_1,\ldots,a_{k-1})}$ has a twin from which
it has high distinguishing number.

\begin{cor}
For distinct non-negative integers $a, b$ and positive integers
$a_1, \ldots, a_{k-1}$, the multisets
$f_{(a,b),(a_1,\ldots,a_{k-1})}$ and
$f_{(b,a),(a_1,\ldots,a_{k-1})}$ have distinguishing number $k$.
\end{cor}

\begin{proof}
By Theorem~\ref{T:ftstandardform}, the fourier transforms
$g_1=\hat{f}_{(a,b),(a_1,\ldots,a_{k-1})}$ and 
$g_2=\hat{f}_{(b,a),(b_1,\ldots,b_{k-1})}$
have identical support $\{0, e_1, e_2, \ldots, e_{k-1}, h\}$ and
are equal except at $x=h$ for which we have 
$g_1(-x)=g_2(-x)$.

By Theorem~\ref{T:fourier}, the distinguishing number of the two
multisets is equal to the length of the minimum 0-sum sequence
$x_1, \ldots, x_t$ for which 
\[g_1(x_1)\ldots g_1(x_t)\ne g_2(x_1)\ldots g_2(x_t).\]

Clearly any such sequence must contain $h$ an odd number of times,
leaving a 1-coordinate in each location that must be made up by a copy
of $e_i$. Thus the shortest such sequence has length $k$ and is
$e_1+e_2+\ldots+e_{k-1}+h=0$.
\end{proof}

We will show that up to translation and linear maps on $\Z_2^{k-1}$,
these are the only pairs of multisets of distinguishing number $k$. Say that 
two multisets $f_1$ and $f_2$ from $\Z_2^{k-1}$ of 
distinguishing number $k$ are in {\em standard position} if 
\[\hat f_1(e_1)\ldots\hat f_1(e_{k-1})\hat f_1(h)\ne
\hat f_2(e_1)\ldots\hat f_2(e_{k-1})\hat f_2(h).\]

\begin{thm}\label{T:structure1}
For $k\ge 3$, if $f_1$ and $f_2$ are two multisets on $\Z_2^{k-1}$ 
of distinguishing number $k$, then there exists a bijective group
homomorphism $\theta:\Z_2^{k-1}\to\Z_2^{k-1}$ such that
$\theta f_1$ and $\theta f_2$ are in standard position.
\end{thm}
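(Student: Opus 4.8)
The plan is to realize the distinguished configuration $e_1, \ldots, e_{k-1}, h$ as the pullback, under a dual map, of a minimal-length distinguishing sequence. Since $f_1$ and $f_2$ have distinguishing number $k$, they are $(k-1)$-indistinguishable but $k$-distinguishable, so by Theorem~\ref{T:fourier} there is a zero-sum sequence $x_1, \ldots, x_k$ of length exactly $k$ with
\[\hat{f_1}(x_1)\cdots\hat{f_1}(x_k)\ne\hat{f_2}(x_1)\cdots\hat{f_2}(x_k),\]
while every zero-sum sequence of length at most $k-1$ produces equal products. First I would fix one such minimal-length sequence.

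The key structural step is to show that $x_1, \ldots, x_k$ is a \emph{minimal} zero-sum sequence, meaning no proper nonempty subfamily sums to zero. Indeed, if some proper nonempty $I\subsetneq\{1,\ldots,k\}$ had $\sum_{i\in I}x_i=0$, then its complement would also sum to zero, and both $I$ and $I^c$ would be zero-sum sequences of length at most $k-1$; by $(k-1)$-indistinguishability the products of $\hat{f_1}$ and of $\hat{f_2}$ over each part agree, and multiplying forces the full products to agree, contradicting the choice of sequence. From this minimality I would read off three consequences, all using $k\ge3$: no $x_i$ is zero (a single zero is a proper zero-sum subfamily), the $x_i$ are pairwise distinct (a repeated pair sums to zero over $\Z_2$ and is proper once $k\ge3$), and, crucially, every proper subfamily is linearly independent. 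In particular $x_1, \ldots, x_{k-1}$ are $k-1$ linearly independent vectors, hence a basis of $\Z_2^{k-1}$.

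With the basis in hand the map is essentially forced. I would define $\theta^*$ by $\theta^* e_j=x_j$ for $1\le j\le k-1$; since it sends a basis to a basis it is a linear automorphism, and its dual $\theta$ is therefore also bijective. Because $h=e_1+\cdots+e_{k-1}$ and $x_1+\cdots+x_k=0$, linearity gives $\theta^* h=x_1+\cdots+x_{k-1}=x_k$ for free, so the all-ones vector lands on the final term of the sequence automatically. Lemma~\ref{L:duals} then yields $\hat{\theta f_i}(e_j)=\hat{f_i}(x_j)$ and $\hat{\theta f_i}(h)=\hat{f_i}(x_k)$, so that
\[\hat{\theta f_1}(e_1)\cdots\hat{\theta f_1}(e_{k-1})\hat{\theta f_1}(h)=\hat{f_1}(x_1)\cdots\hat{f_1}(x_k),\]
and identically for $f_2$; the chosen inequality says precisely that $\theta f_1$ and $\theta f_2$ are in standard position. (That the images still have distinguishing number $k$, so that the notion applies to them, follows since part~(1) of Theorem~\ref{T:thisone} preserves $(k-1)$-indistinguishability while the displayed inequality exhibits a length-$k$ distinguishing sequence.)

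The only genuine obstacle is the linear-independence claim; everything after it is bookkeeping through Lemma~\ref{L:duals}. The decisive observation is that minimality of the distinguishing length is exactly the statement that the sequence has no proper zero-sum subfamily, which over $\Z_2$ coincides with every proper subfamily being linearly independent — and that is what guarantees the first $k-1$ terms form a basis rather than merely spanning.
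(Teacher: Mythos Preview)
Your proof is correct and follows essentially the same approach as the paper: pick a minimal-length distinguishing zero-sum sequence, observe that it cannot split into two shorter zero-sum pieces (else both would give equal products), conclude that $x_1,\ldots,x_{k-1}$ form a basis, and define $\theta$ via its dual sending $e_j\mapsto x_j$. You are slightly more explicit than the paper in noting that $\theta^*h=x_k$ automatically and in checking that the distinguishing number is preserved under the bijective $\theta$, but the argument is the same.
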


\begin{proof}
By Theorem~\ref{T:fourier}, there is a 0-sum sequence of length $k$,
$x_1, \ldots, x_k$ for which
\[\hat f_1(x_1)\ldots\hat f_1(x_k)\ne
\hat f_2(x_1)\ldots\hat f_2(x_k),\]
and there is no shorter sequence.

In particular this means that $x_1, \ldots, x_k$ cannot be split into
two shorter 0-sum sequences (for they would both have equal sums),
so $x_1, \ldots,x_{k-1}$ must be linearly independent and therefore
must span $\Z_2^{k-1}$. As such there is a bijective group homomorphism
$\theta:\Z_2^{k-1}\to\Z_2^{k-1}$ for which 
$\theta^*e_i=x_i$ for all $1\le i\le k-1$.

Then by Lemma~\ref{L:duals}, we have
\[\widehat{\theta f_1}(e_1)\ldots\widehat{\theta f_1}(e_{k-1})\widehat{\theta f_1}(h)\ne
\widehat{\theta g_1}(e_1)\ldots\widehat{\theta g_1}(e_{k-1})\widehat{\theta g_1}(h).\]
\end{proof}

\begin{thm}\label{T:structure}
For $k\ge 3$, if $f_1$ and $f_2$ are two multisets on $\Z_2^{k-1}$ 
of distinguishing
number $k$ in standard position, then there exist distinct non-negative integers $a, b$
, positive integers $a_1, \ldots, a_{k-1}$
and elements $x_1, x_2\in\Z_2^{k-1}$
such that
\begin{align*}
x_1 + f_1&=f_{(a,b),(a_1,\ldots,a_{k-1})}\\
x_2 + f_2&=f_{(b,a),(a_1,\ldots,b_{k-1})}
\end{align*}
\end{thm}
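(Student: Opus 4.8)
The plan is to reconstruct the multisets $f_1, f_2$ from their Fourier transforms, using the fact that being in standard position tightly constrains the supports of $\hat f_1$ and $\hat f_2$. By Theorem~\ref{T:fourier}, distinguishing number $k$ means that the shortest $0$-sum sequence on which the products $\prod \hat f_1(x_j)$ and $\prod \hat f_2(x_j)$ differ has length exactly $k$, and by standard position this witnessing sequence is $e_1, \ldots, e_{k-1}, h$. The key structural claim I would isolate first is that, apart from the value at $0$ (which carries the total mass and is irrelevant to distinguishability since $\hat f_i(0)$ appears in no nontrivial $0$-sum constraint only via even-length pairings), the common support of $\hat f_1$ and $\hat f_2$ must be contained in $\{0, e_1, \ldots, e_{k-1}, h\}$, and $\hat f_1, \hat f_2$ must agree everywhere on this support except possibly at $h$.

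\emph{First} I would argue that $\hat f_1(x) = \hat f_2(x)$ for every $x \notin \{0,h\}$ and that both are nonzero at $x = e_i$ and at $x=h$. Suppose some coordinate $v \notin \{0,h\}$ had $\hat f_1(v) \ne \hat f_2(v)$, or that $\hat f_1(v)\ne 0$ while $\hat f_2(v)=0$; then one can try to build a $0$-sum sequence shorter than $k$ witnessing a difference, contradicting minimality. The cleanest way to do this is to pair $v$ with $v$ itself (length $2$), or more generally to exploit that $\{e_1,\ldots,e_{k-1}\}$ spans $\Z_2^{k-1}$, so any $v$ can be written as a sum of some subset $S \subseteq \{e_1,\ldots,e_{k-1}\}$; the sequence consisting of $v$ together with the $e_i$ for $i \in S$ is a $0$-sum sequence of length $|S|+1 \le k-1 < k$ unless $S = \{1,\ldots,k-1\}$, i.e. unless $v = h$. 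Comparing the two products over this short sequence, and using that $\hat f_1$ and $\hat f_2$ agree on the $e_i$ (which itself needs to be established), forces $\hat f_1(v) = \hat f_2(v)$. I also need each $\hat f_i(e_j)$ and $\hat f_i(h)$ to be nonzero, since if any vanished the witnessing product would be $0$ on both sides; nonzeroness at the $e_i$ and at $h$ follows because the standard-position inequality is a genuine nonequality of products.

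\emph{Next}, having shown the two transforms share support exactly $\{0, e_1, \ldots, e_{k-1}, h\}$ and agree off $\{0,h\}$, I would recover the defining data. By Theorem~\ref{T:ftstandardform}, a transform supported on this set corresponds to a multiset of the shape $f_{(a,b),(a_1,\ldots,a_{k-1})}$ up to translation: writing $\hat f_1(e_i) = 2^{k-2} a_i$ recovers the positive integers $a_i$ (positivity because $\hat f_1(e_i) \ne 0$, and one checks the sign via the integrality and nonnegativity of the underlying multiset), writing $\hat f_1(h) = 2^{k-2}(a-b)$ and $\hat f_1(0) = 2^{k-2}(a+b+\sum a_i)$ pins down $a$ and $b$, and $a \ne b$ because $\hat f_1(h) \ne 0$. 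The translate $x_1$ absorbs any sign pattern on the support coming from Lemma~\ref{L:translates}: since translating $f_1$ by $z$ multiplies $\hat f_1(x)$ by $\langle x,z\rangle = \pm 1$, I can choose $z=x_1$ to make all the values $\hat f_1(e_i)$ and $\hat f_1(h)$ positive, which is exactly the normalization in Theorem~\ref{T:ftstandardform}. The same data $(a_i)$ works for $f_2$ because the transforms agree at each $e_i$; the only difference is the sign/value at $h$, and standard position forces $\hat f_1(h)$ and $\hat f_2(h)$ to have opposite sign, i.e. the roles of $a$ and $b$ swap, giving $f_{(b,a),\ldots}$.

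\emph{The main obstacle} I anticipate is the bookkeeping at the vector $0$ and the interaction between translation and the sign $\pm1$ factors. The value $\hat f_i(0)$ equals the total mass and can participate in $0$-sum sequences only in even multiplicity paired with itself, so it never distinguishes the multisets and must be handled separately from the genuine support argument; I must be careful that the short-sequence argument above never secretly relies on a difference at $0$. The more delicate point is ensuring that a single translate $x_1$ simultaneously positivizes every $\hat f_1(e_i)$ and $\hat f_1(h)$: this amounts to solving $\langle e_i, x_1\rangle = \operatorname{sign}$ for all $i$ together with the constraint at $h$, which is a linear system over $\Z_2$ that is solvable precisely because $e_1,\ldots,e_{k-1}$ form a basis and $h = e_1+\cdots+e_{k-1}$ is their sum, so the sign at $h$ is determined by (and consistent with) the signs at the $e_i$. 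Verifying that this forced consistency matches the opposite-sign requirement at $h$ for the two multisets — so that $f_1$ and $f_2$ land in the $(a,b)$ and $(b,a)$ forms respectively rather than some inconsistent configuration — is where the proof must be argued most carefully.
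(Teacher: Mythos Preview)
Your overall plan matches the paper's, but there is a genuine gap at the crucial structural step: you never actually prove that $\hat f_1$ and $\hat f_2$ \emph{vanish} outside $\{0,e_1,\ldots,e_{k-1},h\}$. Your single short $0$-sum sequence $(e_i)_{i\in S},v$ (with $S$ the support of $v$) only yields an equality of products, and after you know the $e_i$-values match this gives $\hat f_1(v)=\hat f_2(v)$, not $\hat f_1(v)=\hat f_2(v)=0$. The case you do not exclude is precisely $\hat f_1(v)=\hat f_2(v)\ne 0$; but then $x_1+f_1$ cannot be of the form $f_{(a,b),(a_1,\ldots,a_{k-1})}$, since by Theorem~\ref{T:ftstandardform} those multisets have Fourier support exactly $\{0,e_1,\ldots,e_{k-1},h\}$ and translation (Lemma~\ref{L:translates}) only changes signs, never support. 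So without vanishing the conclusion simply fails.

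The paper closes this gap with a second, complementary sequence. For $v$ with support $I$ of size $2\le|I|\le k-2$, both $(e_i)_{i\in I},v$ and $(e_i)_{i\notin I},h,v$ are $0$-sum sequences of length at most $k-1$. Multiplying the two resulting equalities gives
\[
\Bigl(\prod_i \hat f_1(e_i)\Bigr)\hat f_1(h)\,\hat f_1(v)^2
=\Bigl(\prod_i \hat f_2(e_i)\Bigr)\hat f_2(h)\,\hat f_2(v)^2.
\]
Since $\hat f_1(v)^2=\hat f_2(v)^2$ (from the length-$2$ sequence $v,v$) while the standard-position products $\prod_i \hat f_j(e_i)\,\hat f_j(h)$ differ, the only possibility is $\hat f_1(v)=\hat f_2(v)=0$. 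This is the missing idea.

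A secondary point: you set up a circular dependency by trying to establish $\hat f_1(e_i)=\hat f_2(e_i)$ before translating. The paper avoids this: it uses only $\hat f_1(x)^2=\hat f_2(x)^2$ (from length $2$), proves vanishing as above, and \emph{then} translates $f_1$ and $f_2$ by possibly different $x_1,x_2$ chosen to make each $\widehat{(x_j+f_j)}(e_i)$ positive. Equality at the $e_i$ then follows because positive numbers with the same square coincide. Your worry in the last paragraph about the forced sign at $h$ is resolved by noting that the product $\prod_i\hat f(e_i)\,\hat f(h)$ is translation-invariant (the exponents sum to $0$), so the standard-position inequality survives translation; with the $e_i$-factors now equal and nonzero, the $h$-values must differ, and having equal squares they are negatives of one another, giving the $(a,b)$ versus $(b,a)$ swap.
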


\begin{proof}
We know that 
\[\hat f_1(e_1)\ldots\hat f_1(e_{k-1})\hat f_1(h)\ne
\hat f_2(e_1)\ldots\hat f_2(e_{k-1})\hat f_2(h),\]
and there is no shorter such sequence. In particular, note that for 
all $x\in\Z_2^{k-1}$, $x+x=0$ is a shorter
sequence, so $\hat f_1(x)^2=\hat f_2(x)^2$.

Now let $x$ be any element of $\Z_2^{k-1}$ other than $\{0, e_1, 
e_2, \ldots, e_{k-1}, h\}$, and let $I=\{i:1\le i\le k-1, x_i=1\}$ be the set
of 1 coordinates of $x$.
Note that by the choice of $x$, $2\le p\le k-2$. As such, it follows that
$\{e_i:i\in I\}\cup\{x\}$ and $\{e_i:i\notin I\}\cup\{h, x\}$ are both
0-sum subsets of length at most $k-1$, and so it follows that
\begin{align*}
\prod_{i\in I}\hat f_1(e_i)\hat f_1(x)&=
\prod_{i\in I}\hat f_2(e_i)\hat f_2(x)\textrm{ and } \\
\prod_{i\notin I}\hat f_1(e_i)\hat f_1(h)\hat f_1(x)&=
\prod_{i\notin I}\hat f_2(e_i)\hat f_2(h)\hat f_2(x).
\end{align*}

Multiplying these together we get 
\[\prod_i\hat f_1(e_i)\hat f_1(h)\hat f_1(x)^2
=\prod_i\hat f_1(e_i)\hat f_2(h)\hat f_2(x)^2.\]

Since we know that $\hat f_1(x)^2=\hat f_2(x)^2$ and 
\[\hat f_1(e_1)\ldots\hat f_1(e_{k-1})\hat f_1(h)\ne
\hat f_2(e_1)\ldots\hat f_2(e_{k-1})\hat f_2(h),\]
it follows that $\hat f_1(x)=\hat f_2(x)=0.$

To summarize, $\hat f_1$ and $\hat f_2$ have no support outside of
$\{0, e_1, \ldots, e_{k-1}, h\}$, we have 
$\hat f_1(x)^2=\hat f_2(x)^2$ for all $x$ and
\[\hat f_1(e_1)\ldots\hat f_1(e_{k-1})\hat f_1(h)\ne
\hat f_2(e_1)\ldots\hat f_2(e_{k-1})\hat f_2(h).\]

Now let $x_1=\sum_{\hat f_1(e_i)<0}e_i$ and 
$x_2=\sum_{\hat f_2(e_i)<0}$. Then for all $i$, $\widehat{f_1+x_1}(e_i)$ and
$\widehat{f_2+x_2}(e_i)$ are positive numbers with the same square, and so
are equal.

Thus $\widehat{f_1+x_1}$ and $\widehat{f_2+x_2}$ are of the form given in
Theorem~\ref{T:ftstandardform} except that we have not 
shown that $a, b$ are necessarily non-negative, or that any of
$a, b, a_1, \ldots, a_{k-1}$ are necessarily integers.

To that end, note that $a=f_1+x_1(0)$, $b=f_2+x_2(0)$ (and so they
are indeed non-negative integers), and $a_i=f_1+x_1(e_i)-f_2+x_2(0)$, so
is integral.
\end{proof}

It follows that if we have $k$-distinguishable multisets, some element must
appear $k$ times. 

\begin{thm}\label{T:laststraw}
For integers $k\ge 2$, if multisets $f$ and $g$ on $\Z_2^{k-1}$ have
distinguishing number $k$, some element appears with multiplicity at least
$k$ in $f$ or $g$.
\end{thm}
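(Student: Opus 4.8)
The plan is to leverage the structural classification already established in Theorems~\ref{T:structure1} and~\ref{T:structure} for the case $k \ge 3$, and to handle $k = 2$ separately as a base case. For $k \ge 3$, I would begin by applying Theorem~\ref{T:structure1} to obtain a bijective group homomorphism $\theta : \Z_2^{k-1} \to \Z_2^{k-1}$ such that $\theta f$ and $\theta g$ are in standard position. Since $\theta$ is a bijection, the multiset $\theta f$ is simply a relabeling of $f$ and hence has exactly the same multiplicity profile (the same underlying multiset of values in $\N$); the same holds for $\theta g$ and $g$. Thus it suffices to prove the multiplicity claim for the standard-position pair $\theta f, \theta g$, and I may as well rename these back to $f, g$.

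Now Theorem~\ref{T:structure} tells me that after translation there are distinct non-negative integers $a, b$ and positive integers $a_1, \ldots, a_{k-1}$ with $x_1 + f = f_{(a,b),(a_1,\ldots,a_{k-1})}$ and $x_2 + f_2 = f_{(b,a),(a_1,\ldots,a_{k-1})}$. Translation by an element of $\Z_2^{k-1}$ permutes the domain and so again preserves the multiset of values; hence the multiplicities appearing in $f$ are exactly those appearing in $f_{(a,b),(a_1,\ldots,a_{k-1})}$. The next step is therefore to read off the largest value taken by the explicit multiset $f_{(a,b),(a_1,\ldots,a_{k-1})}$ and show it is at least $k$. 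Taking the point $(x_1, \ldots, x_{k-1}) = (1,1,\ldots,1)$, the sum $\sum x_i = k-1$, and the value there is $a + \sum_{i=1}^{k-1} a_i$ if $k-1$ is even, or $b + \sum a_i$ if $k-1$ is odd. In either case, since each $a_i \ge 1$ and the constant ($a$ or $b$) is a non-negative integer, this value is at least $\sum_{i=1}^{k-1} a_i \ge k-1$. To upgrade $k-1$ to $k$, I would observe that $a$ and $b$ are \emph{distinct} non-negative integers, so at least one of them is $\ge 1$: whichever of $a,b$ is positive contributes at the all-ones point (if its parity matches) or at the origin, forcing some value to reach $k$. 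More cleanly, evaluating $f_{(a,b),(a_1,\ldots,a_{k-1})}$ at both the all-ones point and its parity-neighbor, the two candidate values are $a + \sum a_i$ and $b + \sum a_i$; since $\max(a,b) \ge 1$ and $\sum a_i \ge k-1$, the larger of these is at least $k$.

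The main obstacle, and the place requiring the most care, is the base case $k = 2$, where Theorems~\ref{T:structure1} and~\ref{T:structure} were only stated for $k \ge 3$ and so the structural machinery is unavailable. Here I would argue directly: two multisets $f, g$ on $\Z_2^1 = \{0,1\}$ with distinguishing number $2$ are $1$-indistinguishable but not translates. The $1$-deck records the total multiplicity $f(0) + f(1)$, which must therefore agree for $f$ and $g$, while the failure to be translates means $g \ne f$ and $g \ne (1+f)$. I expect a short direct computation to show that a multiset on two points which is $1$-indistinguishable from a non-translate must have a coordinate of multiplicity at least $2$; intuitively, if both multiplicities were at most $1$ the multiset is (up to translation) determined by its total count, contradicting the existence of a genuine non-translate twin. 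The remaining routine verification is simply confirming that the non-negative integer labels $a, b$ produced by Theorem~\ref{T:structure} indeed make the bound $\max(a,b) + \sum a_i \ge k$ tight, which the construction guarantees.
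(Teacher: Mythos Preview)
Your approach is essentially the paper's: reduce via Theorems~\ref{T:structure1} and~\ref{T:structure} to the explicit pair $f_{(a,b),(a_1,\ldots,a_{k-1})}$, $f_{(b,a),(a_1,\ldots,a_{k-1})}$, note that bijective linear maps and translations preserve the multiset of multiplicities, and then read off the value at the all-ones vector $h$.

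There is one slip. Your ``parity-neighbor'' step does not work: evaluating $f_{(a,b),(a_1,\ldots,a_{k-1})}$ at a Hamming neighbor $h-e_j$ of $h$ yields $\sum_{i\ne j} a_i$ plus whichever of $a,b$ has the opposite parity, i.e.\ you lose an $a_j$ and do \emph{not} obtain both $a+\sum a_i$ and $b+\sum a_i$ inside the single multiset $f_{(a,b),\ldots}$. The clean fix, and what the paper actually does, is to look at the point $h$ in \emph{both} multisets: $f_{(a,b),\ldots}(h)$ and $f_{(b,a),\ldots}(h)$ are $a+\sum a_i$ and $b+\sum a_i$ in some order (which order depends on the parity of $k-1$), so between $f$ and $g$ one of them attains $\max(a,b)+\sum_{i=1}^{k-1} a_i \ge 1+(k-1)=k$. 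With that correction your argument goes through.

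Your explicit treatment of the base case $k=2$ is correct and in fact more careful than the paper's own proof, which invokes Theorem~\ref{T:structure} without comment even though that theorem is stated only for $k\ge 3$.
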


\begin{proof}
By Theorem~\ref{T:structure}, $f$ and $g$ are translates of a linear
image of some pair $f_{(a,b),(a_1,\ldots,a_{k-1})}$ and 
$f_{(b,a),(a_1,\ldots,a_{k-1})}$. The multiplicity of $h$ in these two
multisets is $a+a_1+\ldots+a_{k-1}$ and $b+a_1+\ldots+a_{k-1}$.
Since each $a_i$ is a positive integer, and $a$ and $b$ are distinct
non-negative integers, one of these numbers is at least $k$.
\end{proof}

This now allows us to prove our upper bound.

\begin{cor}\label{C:otherhalf}
For positive integers $n$ and $k$, if the reconstruction number 
of $\Z_2^n$ is at least $k$, then $2^{n+1-k}\ge k$.
\end{cor}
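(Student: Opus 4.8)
The plan is to prove the contrapositive: assuming the reconstruction number of $\Z_2^n$ is at least $k$, I will produce a pair of non-translate multisets that are $k$-indistinguishable and then extract the inequality $2^{n+1-k}\ge k$ from a counting argument on their supports. By definition of reconstruction number, if $r(\Z_2^n)\ge k$ then there exist multisets $f$ and $g$ on $\Z_2^n$ that are not translates of one another but are $(k-1)$-indistinguishable; equivalently, their distinguishing number is at least $k$. First I would pass to the case where the distinguishing number is \emph{exactly} $k$ (if it were larger we could argue for a correspondingly larger bound, so the worst case for the inequality is distinguishing number exactly $k$).

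The key reduction is Theorem~\ref{T:thisone}(2): since $f$ and $g$ are $k$-distinguishable, there is a linear map $\theta:\Z_2^n\to\Z_2^{k-1}$ such that $\theta f$ and $\theta g$ are $k$-distinguishable. Because $f$ and $g$ are $(k-1)$-indistinguishable, part~(1) of the same theorem guarantees $\theta f$ and $\theta g$ are $(k-1)$-indistinguishable, so the images $\theta f$ and $\theta g$ have distinguishing number exactly $k$ on $\Z_2^{k-1}$. Now I can apply the structural result for multisets on $\Z_2^{k-1}$: by Theorem~\ref{T:laststraw}, some element appears with multiplicity at least $k$ in $\theta f$ or in $\theta g$.

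The final step is the counting argument that converts this multiplicity into the desired inequality. The total mass of $f$ (that is, $\sum_{z}f(z)$) equals the total mass of $\theta f$, since $\theta f(x)=\sum_{\theta z=x}f(z)$ just redistributes the mass. A single fiber $\theta^{-1}(x)$ is a coset of the kernel of $\theta$, which (for $\theta$ surjective, which I should arrange) has size $2^{n-(k-1)}=2^{n-k+1}$. If some value $\theta f(x)\ge k$, that mass $k$ is spread over the $2^{n-k+1}$ points of the fiber $\theta^{-1}(x)$. The cleanest route is to observe that if the reconstruction number is genuinely $k$, I am free to choose $f,g$ to be \emph{sets} (the reconstruction number, as opposed to the multiset reconstruction number, concerns ordinary subsets), so that $f$ and $g$ take values in $\{0,1\}$. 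Then $\theta f(x)\le 2^{n-k+1}$ automatically, and combining with $\theta f(x)\ge k$ for some $x$ gives $2^{n-k+1}\ge k$, which is exactly $2^{n+1-k}\ge k$ as required.

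The main obstacle I anticipate is the surjectivity and the set-versus-multiset bookkeeping. The theorem is stated for the (set) reconstruction number, but the structural machinery of Section~\ref{S:upper2} is developed for multisets; I must be careful that the multiset $\theta f$ arising from a genuine \emph{set} $f$ really does have its fiber-sums bounded by the fiber size $2^{n-k+1}$, and that the element of multiplicity $\ge k$ supplied by Theorem~\ref{T:laststraw} lands in a single fiber. If $\theta$ is not surjective its image sits inside a proper subspace and the relevant fibers are larger, so I should first reduce to the surjective case by factoring through the image; alternatively I note that a non-surjective $\theta$ only makes the fibers bigger, which strengthens rather than weakens the bound. Handling the edge behavior when $\theta$ fails to be surjective, and confirming that the $\ge k$ multiplicity is attained at a point whose fiber has the claimed size, is the delicate part; once that is pinned down the inequality $2^{n+1-k}\ge k$ follows immediately.
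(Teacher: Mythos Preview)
Your argument is essentially the paper's: reduce via Theorem~\ref{T:thisone} to a pair of multisets on $\Z_2^{t-1}$ with distinguishing number $t$, invoke Theorem~\ref{T:laststraw} to get an element of multiplicity $\ge t$, and bound that multiplicity by the fibre size $2^{n+1-t}$ because the original objects are sets. The paper avoids your ``pass to distinguishing number exactly $k$'' manoeuvre by simply letting $t\ge k$ be the actual reconstruction number and using the monotonicity $2^{n+1-k}\ge 2^{n+1-t}\ge t\ge k$ at the end, which is the clean way to do what your parenthetical sketches. One small correction: a non-surjective $\theta$ makes fibres \emph{larger}, which \emph{weakens} (not strengthens) the inequality $k\le|\theta^{-1}(x)|$; however, surjectivity is in fact automatic here, since a pair on $\Z_2^{k-1}$ with distinguishing number $k$ forces the witnessing $x_1,\dots,x_{k-1}$ to be linearly independent (as in the proof of Theorem~\ref{T:structure1}), so the $\theta$ produced by Theorem~\ref{T:thisone} has full rank and your factoring-through-the-image fix is unnecessary.
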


\begin{proof}
Suppose that the reconstruction number of $\Z_2^n$ is $t\ge k$. 
Then there exists two sets $S_1, S_2\subseteq\Z_2^n$ that are
$t$-distinguishable, but not $t-1$-distinguishable.

Then by Theorem~\ref{T:thisone}, there exists a map 
$\theta:\Z_2^n\to\Z_2^{t-1}$ such that the multisets $\theta S_1, 
\theta S_2$ from $\Z_2^{t-1}$ are $t$-distinguishable, but not
$t-1$-distinguishable.

Finally, by Theorem~\ref{T:laststraw}, there must be some element 
of $\Z_2^{t-1}$ that appears in $\theta S_1$ or $\theta S_2$ at least
$t$ times. Since each element of $\Z_2^{t-1}$ only has
$2^{n+1-t}$ inverse images, it follows that $2^{n+1-t}\ge t$.

Since $t\ge k$, we must have $2^{n+1-k}\ge 2^{n+1-t}\ge t\ge k$.
\end{proof}

Tying everything together, we now have proved the precise
value of the reconstruction number of the hypercube.

\begin{cor}
The reconstruction number of $\Z_2^n$ is 
\[\floor{n+1-\log_2(n+1-\log_2(n))}.\]
\end{cor}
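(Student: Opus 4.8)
The plan is to combine the three ingredients assembled in the preceding sections: the arithmetic equivalence of Theorem~\ref{T:whythatexpression}, the lower bound of Corollary~\ref{C:construction4}, and the upper bound of Corollary~\ref{C:otherhalf}. Writing $m=\floor{n+1-\log_2(n+1-\log_2(n))}$ for the target value, the entire argument rests on the single observation that, by Theorem~\ref{T:whythatexpression}, for every positive integer $k$ the inequality $k\le m$ holds precisely when $2^{n+1-k}\ge k$. This lets me pass freely between the unwieldy floor expression and the clean exponential inequality, reducing the corollary to matching up the two one-sided bounds.

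For the lower bound I would apply the equivalence with $k=m$: since $m\le m$ holds trivially, Theorem~\ref{T:whythatexpression} yields $2^{n+1-m}\ge m$, and Corollary~\ref{C:construction4} then delivers $r(\Z_2^n)\ge m$. For the upper bound I would argue by contradiction. Suppose $r(\Z_2^n)\ge m+1$; then Corollary~\ref{C:otherhalf}, applied with $k=m+1$, gives $2^{n+1-(m+1)}\ge m+1$. Reading Theorem~\ref{T:whythatexpression} again with $k=m+1$ would then force $m+1\le m$, which is absurd. Hence $r(\Z_2^n)\le m$, and together with the lower bound this gives $r(\Z_2^n)=m$.

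I expect no genuine obstacle here: all the substance lives in the construction of Section~\ref{S:lower} and in the Fourier and structure analysis of Sections~\ref{S:upper} and~\ref{S:upper2}, so this final corollary is purely bookkeeping. The only point requiring a moment's care is that Theorem~\ref{T:whythatexpression} and both bounding corollaries are phrased for positive integers, so I should confirm at the outset that $m$ is a positive integer whenever $n\ge 1$. This follows by taking $k=1$: since $2^{n+1-1}=2^n\ge 1$, the equivalence gives $1\le m$, so the three cited results apply legitimately with the chosen values $k=m$ and $k=m+1$.
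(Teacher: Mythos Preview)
Your argument is correct and follows exactly the same route as the paper: you combine Corollary~\ref{C:construction4} and Corollary~\ref{C:otherhalf} via the arithmetic equivalence of Theorem~\ref{T:whythatexpression} to pin down the reconstruction number as the largest $k$ with $2^{n+1-k}\ge k$. The only difference is presentational --- you spell out the two directions and the positivity check $m\ge 1$ explicitly, whereas the paper compresses everything into a single sentence.
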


\begin{proof}
By Corollaries~\ref{C:construction4} and~\ref{C:otherhalf}, 
the reconstruction number of $\Z_2^n$ is the maximum
value of $k$ for which
if $2^{n+1-k}\ge k$. By Theorem~\ref{T:whythatexpression}, this is
$\floor{n+1-\log_2(n+1-\log_2(n))}.$
\end{proof}


\section*{Acknowledgments} 
The author is grateful to the main anonymous reviewer for many helpful observations and tips.

\bibliographystyle{amsplain}

\begin{thebibliography}{99}
\bibitem{ACKR}
Noga Alon, Yair Caro, Ilia Krasikov and Yehuda Roditty
\newblock Combinatorial Reconstruction Problems
\newblock {\em Journal of Combinatorial Theory, Series B}, 47(2):153--161, 1989.

\bibitem{CPC:257539}
Luke Pebody
\newblock The Reconstructibility of Finite Abelian Groups.
\newblock {\em Combinatorics, Probability and Computing}, 13(6):867--892, 2004.

\bibitem{RS}
Jamie Radcliffe and Alex Scott
\newblock Reconstructing subsets of $\mathbb{Z}_n$
\newblock {\em Journal of Combinatorial Theory, Series A}, 83(2):169--187, 1998.

\end{thebibliography}


\begin{dajauthors}
\begin{authorinfo}[lp]
  Luke Pebody\\
  Rokos Capital Management\\
  23 Savile Row\\
  London, United Kingdom\\
  luke\imageat{}pebody\imagedot{}org
\end{authorinfo}
\end{dajauthors}

\end{document}